\newtheorem{theorem}{Theorem}
\newtheorem{conj}[theorem]{Conjecture}
\newtheorem{lemma}[theorem]{Lemma}
\newtheorem{claim}{Claim}
\let\oldproofname=\proofname
\renewcommand{\proofname}{\rm\bf{\oldproofname}}
\def\coef1{{(3/2)}}
\def\supp{{\rm support}}
\def\Tr{{\rm Tr}}
\def\w{f}
\begin{document}

\title{The minimum number of triangular edges and a symmetrization method for multiple graphs}
\author{Zolt\'an F\"uredi
\thanks{ Alfr\'ed R\'enyi Institute of Mathematics, 13--15 Re\'altanoda Street, 1053 Budapest, Hungary. \newline
E-mail: {\tt z-furedi@illinois.edu}.\newline\indent
Research was supported in part by grant (no. K104343) from the
National Research, Development and Innovation Office – NKFIH,
 by the Simons Foundation Collaboration Grant \#317487,
and by the European Research Council Advanced Investigators Grant 267195.}
\and Zeinab Maleki
\thanks{ Department of Mathematical Sciences, Isfahan University of Technology, Isfahan 84156-83111, Iran. \newline E-mail: {\tt zmaleki@math.iut.ac.ir}
\newline\indent
{\it 2010 Mathematics Subject Classifications:}
05C35, 05C22, 05D99. \hfill \jobname
\newline\indent
{\it Key Words}:  Tur\'an number, triangles, extremal graphs, symmetrization.  \hfill\today
}
}
\date{This work was done while the authors visited the Department of Mathematics and Computer Science, Emory University, Atlanta, GA, USA.}

\maketitle

\begin{abstract}
We give an asymptotic formula for the minimum number of edges contained in triangles in a graph having $n$ vertices and $e$ edges.
Our main tool is a generalization of Zykov's symmetrization method that can be applied for several graphs simultaneously.
\end{abstract}

\section{Graphs with few triangular edges}

Erd\H os, Faudree, and Rousseau~\cite{EFR92} showed that a graph on $n$ vertices and at least $\lfloor n^2/4\rfloor+1$ edges has at least
 $2\lfloor n/2\rfloor +1$ edges in triangles.
To see that this result is sharp, consider the graph obtained by adding one edge to the larger side of the complete bipartite graph $K_{\lceil n/2 \rceil, \lfloor n/2\rfloor}$.
We consider a more general problem, where the number of edges may be larger than $\lfloor n^2/4\rfloor+1$. Given a graph $G$, denote by $\Tr(G)$ the number of edges of $G$ contained in triangles, and let $\Tr(n,e):=\min \{ \Tr(G): |V(G)|=n, \, e(G)=e\}$.
With this notation the above result of Erd\H os, Faudree, and Rousseau can be reformulated as
\begin{equation}\label{eq1}
    \Tr(n,\lfloor n^2/4\rfloor+1)= 2\lfloor n/2\rfloor +1.
    \end{equation}
Note that $\Tr(n,e)=0$ whenever $e\leq n^2/4$, because in that case there exist triangle-free (even bipartite) graphs with $n$ vertices and $e$ edges.
To avoid trivialities, we usually implicitly assume that $e> n^2/4$.

Given integers $a$, $b$ and $c$, ($a \geq 2$), we define a family of graphs ${\mathcal G}(a,b,c)$ as follows, see Figure~\ref{G(a,b,c)} below.
The vertex set $V$ of a graph $G$ in this class has a partition $V = A \cup B \cup C$ where $|A|=a$, $|B|=b$, and $|C|=c$, such that $B$ and $C$ are independent sets, $B\cup C$ induces a complete bipartite graph $K_{b,c}$, the vertices of $C$ have neighbors only in $B$, and
$G[A]$ and $G[A,B]$ are `almost complete graphs', namely, they span more than ${|A|-1 \choose 2}+ |A||B|$ edges.
The edges of $G[B,C]$ are the non-triangular edges.

\begin{figure}[ht]\label{G(a,b,c)}
\begin{center}
\includegraphics[scale=.5]{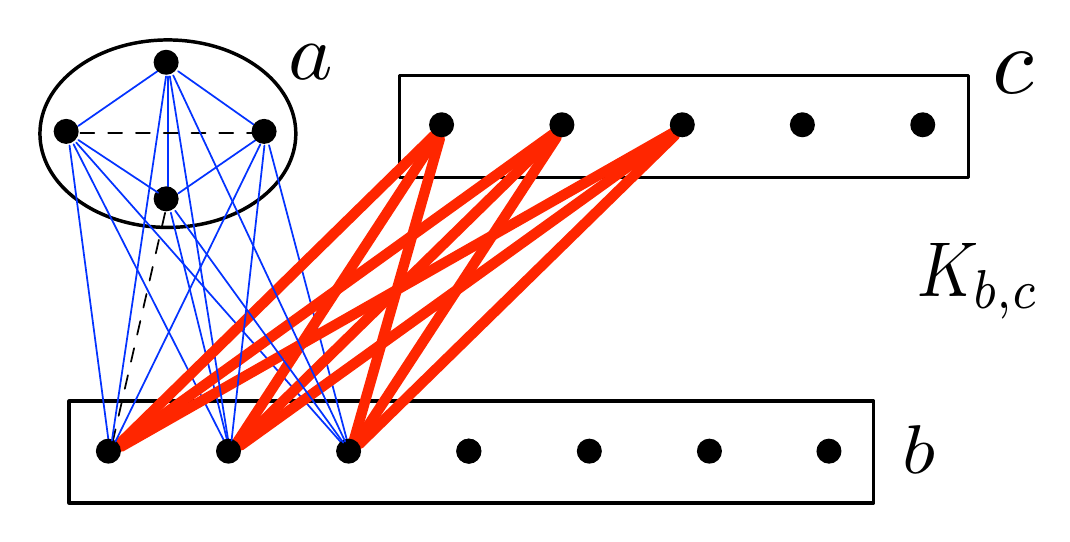}
\end{center}
\caption{\quad A graph from ${\mathcal G}(a,b,c)$.}
\end{figure}

Given integers $n\geq 3$ and $n^2/4 < e \leq {n \choose 2}$, we define a class of graphs, ${\mathcal G}(n,e)$,  with many non-triangular edges as follows.
Put a graph $G \in {\mathcal G}(a,b,c)$ into the class ${\mathcal G}(n,e)$ if it has $n$ vertices and $e$ edges.
Define $g(n,e)$ as $\min\{\Tr (G) : G \in {\mathcal G}(n,e)\}$.
We have
\begin{equation}\label{eq2}
  \Tr(n,e)\leq g(n,e)=\min\{e-bc: a+b+c=n,\, a,b,c \in \mathbb{N}\cup \{0\},\, {a\choose 2}+ab+bc \ge e\}.
  \end{equation}

We believe that one can extend the Erd\H os, Faudree, Rousseau theorem~\cite{EFR92} as follows.
\begin{conj}\label{conj1}
Suppose that $G$ is an $n$-vertex graph with $e$ edges, such that $e> n^2/4$ and it has the minimum number of triangular edges,
 i.e., $\Tr(G)= \Tr(n,e)$.
Then $G\in {\mathcal G}(n,e)$.
\end{conj}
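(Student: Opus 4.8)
\medskip
\noindent\textbf{A proof proposal for Conjecture~\ref{conj1}.}

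The plan is to fix an extremal graph $G$ — one with $|V(G)|=n$, $e(G)=e$, $e>n^2/4$, and $\Tr(G)=\Tr(n,e)$ — and, among all such graphs, to choose one that in addition maximizes the number of ordered pairs $(u,v)$, $u\ne v$, with $N_G(u)=N_G(v)$ (so that $u,v$ are non-adjacent twins). Partition the edges as $E(G)=E_\Delta\cup E_0$ into triangular and non-triangular edges, and set $G_\Delta=(V,E_\Delta)$, $G_0=(V,E_0)$, so that $e(G_\Delta)=\Tr(n,e)$. Two elementary observations guide everything. First, $G_0$ is triangle-free, since a triangle all of whose edges were non-triangular would be a contradiction. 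Second, the vertices split naturally into three types — those incident only to triangular edges, those incident only to non-triangular edges, and the mixed ones — and these are exactly the candidates for the sets $A$, $C$, $B$ appearing in the definition of $\mathcal{G}(a,b,c)$. Thus the goal is to show that, after the symmetrization moves below, $G_0$ is a complete bipartite graph $K_{b,c}$ on parts $B,C$ with $N_G(C)\subseteq B$, while the remainder satisfies the almost-completeness bound $e(G[A])+e(G[A,B])>\binom{|A|-1}{2}+|A||B|$; that is, $G\in\mathcal{G}(n,e)$.

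The engine is the generalized symmetrization of this paper, applied to the pair $(G_\Delta,G_0)$ (equivalently, to $G$ together with the indicator of which edges lie in triangles): for a suitably chosen legal pair $u\ne v$ one overwrites $v$'s neighbourhoods in the two subgraphs by those of $u$, and — since this changes $e(G)$ unless $\deg_G(u)=\deg_G(v)$ — one runs a compensating symmetrization on a second pair of vertices, which is precisely what applying the method to several graphs simultaneously is for. The outcome one aims for is that $V$ collapses onto a bounded number of classes on which both $G_\Delta$ and $G_0$ are homogeneous, so that $G$ becomes a blow-up of a bounded-size weighted template; a direct optimization over templates should then force the only template consistent with minimality of $\Tr$ to be the one with three classes having all edges inside the first, all edges between the first and the second, all edges between the second and the third, and nothing else — whose blow-up is exactly the generic member of $\mathcal{G}(a,b,c)$. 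The step I expect to be the genuine obstacle is proving that no such move increases $\Tr$: triangularity of an edge is not a local property, so one must track second neighbourhoods through the move and argue that the overwritten vertex can always be taken to be the ``worse'' of the two; moreover the graph must stay simple, which restricts the legal moves, so one needs the secondary extremality (maximal twin structure) as a potential to guarantee that an improving legal move exists until the template structure is reached.

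Once $G$ has this three-part shape, the rest is an explicit optimization: over all partitions $V=A\cup B\cup C$ of the prescribed form one minimizes $e-bc$ subject to $a+b+c=n$ and $\binom{a}{2}+ab+bc\ge e$, which is exactly the quantity $g(n,e)$ of~\eqref{eq2}; combined with the already-known bound $\Tr(n,e)\le g(n,e)$ this pins down the value and forces the minimizers to lie in $\mathcal{G}(n,e)$. The induction would be on $n$ (or on $e-\lfloor n^2/4\rfloor$), with base case the Erd\H os--Faudree--Rousseau equality~\eqref{eq1}, and the ranges of $e$ just above $n^2/4$ and just below $\binom{n}{2}$ checked separately. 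Beyond the bookkeeping I foresee two real difficulties: the $\Tr$-monotonicity of the generalized symmetrization mentioned above; and, even granted a near-extremal three-part structure, ruling out the ``almost but not quite'' configurations — $G_0$ bipartite but not complete bipartite, a vertex of $C$ with a neighbour outside $B$, or $G[A]\cup G[A,B]$ just below the stated density threshold — for \emph{every} $e$ with $n^2/4<e\le\binom{n}{2}$ rather than only asymptotically. Making that last point airtight is exactly why the statement is, at present, only conjectured.
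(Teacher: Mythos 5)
This statement is a conjecture, not a theorem of the paper: the authors explicitly say they cannot prove it, and they establish only the weaker Theorem~\ref{thm:th2}, namely $g(n,e)-(3/2)n\le\Tr(n,e)\le g(n,e)$, via the \emph{continuous} simultaneous symmetrization (Theorem~\ref{thm:ind} and Lemma~\ref{lem:triw}) applied to weight vectors on the simplex, not to the graph itself. Your proposal is therefore not being measured against a proof in the paper, and, as written, it is a research plan with the decisive steps missing rather than a proof. Concretely, two gaps are fatal. First, the engine you invoke does not exist in the form you need: Theorem~\ref{thm:ind} moves a weight vector ${\mathbf y}\in S_n$ so as to not decrease the polynomials $\w(G_i,\cdot)$; it does not produce a graph operation that keeps $n$ and $e(G)$ fixed, does not increase the number of triangular edges, and makes progress toward a bounded template. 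Your "overwrite $v$'s neighbourhood by $u$'s and run a compensating symmetrization on a second pair" is exactly the lemma that would have to be proved, and you yourself flag that triangularity is non-local, so a move can create new triangles through second neighbourhoods far from the modified vertices; no potential-function argument (your maximal-twin extremality) is given that rules this out. This is precisely the hard refinement that Gruslys and Letzter~\cite{GL16+} carried out, at considerable length, and even then only to obtain the numerical equality $\Tr(n,e)=g(n,e)$ for large $n$.

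Second, your concluding step is a non sequitur: even granting that symmetrization collapses $G$ to a three-class blow-up and that the optimization over templates reproduces $g(n,e)$ from~\eqref{eq2}, the equality $\Tr(n,e)=g(n,e)$ does not force an \emph{arbitrary} extremal graph $G$ to lie in ${\mathcal G}(n,e)$ — symmetrization transforms $G$ into a structured graph with no worse parameters, but it destroys the identity of $G$, so a structural conclusion about the original extremal graph needs a separate stability/uniqueness argument (this is exactly the part of Conjecture~\ref{conj1} that the paper's "New developments" note records as still open even after~\cite{GL16+}). Also note that any induction anchored at the Erd\H os--Faudree--Rousseau equality~\eqref{eq1} handles only $e=\lfloor n^2/4\rfloor+1$; passing from $e$ to $e+1$ changes $g(n,e)$ non-uniformly and no induction step is sketched. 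In short: the approach is reasonable in spirit and closer to the later discrete refinement of~\cite{GL16+} than to this paper's continuous method, but the $\Tr$-monotone move, the collapse to a bounded template, and the final structural deduction are all unproved, so the proposal does not establish the conjecture.
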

In particular, we conjecture that $\Tr(n,e)=g(n,e)$.
We prove a slightly weaker result.

\begin{theorem}\label{thm:th2}
For $e>n^2/4$ we have
$g(n, e)-\coef1 n \leq \Tr(n,e) \leq g(n,e)$.
\end{theorem}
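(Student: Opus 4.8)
Only the lower bound $\Tr(n,e)\ge g(n,e)-\coef1 n$ needs an argument, the upper bound being \eqref{eq2}. Write $N=N(G)$ for the graph on $V(G)$ formed by the non-triangular edges of $G$: it is triangle-free and $e=e(N)+\Tr(G)$. Since $g(n,e)$ is a minimum over triples, the desired inequality is equivalent to the following: \emph{for every $n$-vertex graph $G$ with $e>n^2/4$ edges there exist integers $a,b,c\ge 0$ with $a+b+c=n$, ${a\choose 2}+ab+bc\ge e$, and $bc\ge e(N)-\coef1 n$.} (If $G$ were itself a member of ${\mathcal G}(a,b,c)$, the triple $(a,b,c)$ would serve, since then $bc=e(N)$ and $e=e(G[A\cup B])+bc\le{a\choose 2}+ab+bc$; so the real content is that an extremal $G$ lies only $O(n)$ edge-modifications away from ${\mathcal G}(n,e)$.)

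I would first record two elementary facts about $N$ that constrain everything. If $uv\in N$ then $N_G(u)\cap N_G(v)=\varnothing$, so $d_G(u)+d_G(v)\le n$; hence every edge $uv$ with $d_G(u)+d_G(v)>n$ is triangular, which already pins a substantial part of $\Tr(G)$ onto the high-degree vertices of $G$. Also, for each vertex $v$ the set $\{u:uv\in N\}$ is independent in $G$; this is the reason the non-triangular edges want to be packed inside a single complete bipartite pattern.

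The main tool is the generalized Zykov symmetrization. Starting from an extremal $G$, one repeatedly selects two non-adjacent vertices and replaces the neighbourhood of one by a copy of the neighbourhood of the other. On $G$ alone this changes $e(G)$ and may create triangular edges, so one instead performs the move simultaneously on the pair of graphs $(G,N)$ --- the ``several graphs'' device --- each time choosing the two vertices and the direction of cloning so that neither $e(G)$ nor $e(N)$ decreases. When no such move remains, the configuration is pinned to the shape $V=A\cup B\cup C$ with $B,C$ independent, $B\cup C$ complete bipartite, $N_G(C)\subseteq B$ and $G[A\cup B]$ a complete split graph, up to a defect of $O(n)$ edges --- that is, $O(n)$ edge-changes from a member of ${\mathcal G}(a,b,c)$. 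Repairing the defect, absorbing any stray vertices, and rounding the class sizes to integers then produces a triple with $a+b+c=n$, ${a\choose 2}+ab+bc\ge e$, and $bc\ge e(N)-\coef1 n$, the constant $\coef1$ being precisely what this clean-up and rounding cost.

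The crux, I expect, is proving that the coupled move never decreases $e(N)$ and that some legal move exists until the near-canonical shape is reached. Being in no triangle is a global, non-monotone property of an edge, so making two vertices twins can destroy old triangles and create new ones at the same time; the bookkeeping --- how the common neighbourhood, hence the triangular status, of each edge changes when a twin is inserted, tracked in $G$ and $N$ together --- must be arranged so that the net effect on $e(N)$ is nonnegative, and analysing the residual ``stuck'' configurations is what yields the $\coef1 n$ loss (removing it would establish Conjecture~\ref{conj1}).
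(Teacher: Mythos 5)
Your reduction is fine: only the lower bound needs work, and it suffices to exhibit integers $a,b,c\ge 0$ with $a+b+c=n$, ${a\choose 2}+ab+bc\ge e$ and $bc\ge e(N)-(3/2)\,n$, where $N$ is the graph of non-triangular edges. The gap is that the engine you invoke --- a discrete Zykov symmetrization performed simultaneously on the pair $(G,N)$, ``each time choosing the two vertices and the direction of cloning so that neither $e(G)$ nor $e(N)$ decreases,'' terminating within $O(n)$ edge-modifications of a member of ${\mathcal G}(a,b,c)$ --- is precisely the step you never establish, and it is the whole difficulty. After a cloning move the set of non-triangular edges changes: if you recompute $N$ after each move, there is no argument that $e(N)$ is monotone (a single move can create and destroy triangles at once, and there may be no admissible pair for which both quantities are preserved); if instead you freeze $N$ at the start, edges of the frozen $N$ can become triangular, so the terminal configuration tells you nothing about the true number of non-triangular edges of the graph you end with. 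You also give no argument that a legal move exists whenever the graph is far from the canonical shape, nor that the ``stuck'' configurations are within $O(n)$ of ${\mathcal G}(a,b,c)$; these are exactly the assertions carrying the theorem, and the paper itself points out that the discrete method ``cannot be used directly'' here for this reason (the later Gruslys--Letzter proof that does push a refined symmetrization through is long and delicate).

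The paper avoids all of this by symmetrizing a weight vector rather than the graph: $G_1=G$ and $G_2=N$ are fixed once and for all, so the triangular status of edges never has to be tracked. Lemma~\ref{lem:geometry} and Theorem~\ref{thm:ind} show that, for $d$ subgraphs, weight can be pushed off a vertex of any independent set of size $d+1$ without decreasing any $\w(G_i,\cdot)$; Lemma~\ref{lem:triw} then shows that for the pair $(G,N)$ one can reach a vector ${\mathbf y}$ whose support spans exactly one $G_2$-edge $v_1v_2$, the rest being a clique. Starting from the uniform vector $(1/n){\mathbf j}$ this yields reals $b=ny_1$, $c=ny_2$, $a=n-b-c$ satisfying $\tfrac12 a^2+ab+bc\ge e$ and $bc\ge e-\Tr(n,e)$, i.e.\ $\Tr(n,e)\ge t(n,e)$ for the continuous relaxation (Theorem~\ref{th7}), and a trivial rounding of $(a,b,c)$ to integers costs at most $(3/2)\,n$, giving $g(n,e)-(3/2)\,n\le \Tr(n,e)$. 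So while your target inequality and final rounding step match the paper's, the central mechanism you propose is unproven as stated, and making it rigorous would require a substantially different and harder analysis than what you sketch.
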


Our main tool, presented in Section~\ref{sec:sym_mtd}, is a new symmetrization method, a generalization of previous results by Zykov and Motzkin and Straus such that it can be applied to more than one graph simultaneously.

In Section~\ref{sec:triangle}, we use the new symmetrization method to prove a lemma about triangular edges of a given graph. In Section~\ref{sec:lb}, using the lemma of Section~\ref{sec:triangle} we complete the proof of Theorem~\ref{thm:th2}. In Section~\ref{sec:future} we introduce more problems for future research, our method can be used to solve some of them (see~\cite{FM14+}).

\section{The symmetrization method}
\label{sec:sym_mtd}

In this section, we describe Zykov's symmetrisation process~\cite{Zykov49}. It starts with a $K_{p}$-free graph $G$ with vertex set $\{v_1,\dots,v_n\}$ and at each step takes two nonadjacent vertices $v_i$ and $v_j$ such that $\deg(v_i) > \deg (v_j)$ and replaces all edges incident to $v_j$ by new edges incident to $v_j$ and to the neighborhood $N(v_i)$. We do the same if $\deg(v_i) = \deg (v_j)$,   $N(v_i) \neq N(v_j)$ and $i <j$.
Symmetrization does not increase the size of the largest clique and does not decrease the number of edges.
When the process terminates it yields a complete multipartite graph with at most $p-1$ parts.

This way Zykov~\cite{Zykov49} gave a proof of Tur\'an's theorem which states that the number of edges of a $K_p$-free graph is at most as large as in a complete $(p-1)$-partite graph with almost equal parts.
It seems that this method cannot be used directly to determine $\Tr(n,e)$ because we need to increase simultaneously the number of edges and the number of non-triangular edges. In the rest of the section this
method will be generalized to settings involving more than one graph.

Let us recall a continuous version of Zykov's symmetrisation method, due to Motzkin and Straus~{\rm \cite{MS65}}.
Given a graph $G$ with vertex set $\{ v_1, \dots, v_n\}$ define a real polynomial
$$\w(G,{\mathbf x}):=\sum \{ x_ix_j: v_iv_j\in E \}.$$
Define a simplex $S_n:=\{{\mathbf x}\in {\mathbb{R}}^n: \forall x_i\geq 0$ and $\sum x_i=1\}$.
Let $\w(G):=\max \{ \w(G,{\mathbf x}): {\mathbf x}\in S_n\}$.
Motzkin and Straus~\cite{MS65} provided an alternative proof of an asymptotic version of Tur\'{a}n's theorem by observing a remarkable connection between the clique number, $\omega(G)$,  and $\w(G)$.
They proved that   $\w(G)=(\omega-1)/(2\omega)$.
Their main tool was a continuous version of Zykov's symmetrization as follows.

\begin{theorem}{\rm{(Motzkin and Straus~\cite{MS65}})\label{thm:MS}}\enskip
Given a graph $G$ on $n$ vertices and a vector ${\mathbf x}\in S_n$, there exists ${\mathbf y}\in S_n$ such that $\w(G,{\mathbf x})\leq \w(G,{\mathbf y})$ and $\supp ({\mathbf y})$ induces a complete subgraph.
\end{theorem}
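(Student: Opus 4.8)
The plan is to realize the continuous analogue of a single Zykov vertex-merging step and iterate it until the support becomes a clique. Suppose $\mathbf{x}\in S_n$ and $\supp(\mathbf{x})$ does not already induce a complete subgraph (otherwise take $\mathbf{y}:=\mathbf{x}$). Then there are two vertices $v_i,v_j\in\supp(\mathbf{x})$ with $v_iv_j\notin E(G)$. For $t\in[-x_i,\,x_j]$ let $\mathbf{x}(t)$ be obtained from $\mathbf{x}$ by replacing the $i$th coordinate by $x_i+t$ and the $j$th coordinate by $x_j-t$, leaving all other coordinates unchanged; then $\mathbf{x}(t)\in S_n$ for every such $t$, since the two perturbed coordinates stay nonnegative and their sum is preserved. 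The key observation is that, because $v_iv_j$ is a non-edge, the polynomial $\w(G,\cdot)$ contains no monomial $x_ix_j$, so $t\mapsto \w(G,\mathbf{x}(t))$ is an affine function of $t$ on $[-x_i,\,x_j]$.

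An affine function on a closed interval attains its maximum at an endpoint, so there is $t^*\in\{-x_i,\,x_j\}$ with $\w(G,\mathbf{x}(t^*))\ge \w(G,\mathbf{x}(0))=\w(G,\mathbf{x})$. At $t=-x_i$ the $i$th coordinate vanishes, and at $t=x_j$ the $j$th coordinate vanishes; in either case no previously zero coordinate becomes positive, so $\supp(\mathbf{x}(t^*))\subsetneq\supp(\mathbf{x})$. Thus one move strictly decreases the size of the support while not decreasing the value of $\w(G,\cdot)$. Since the support has at most $n$ elements, iterating this move yields, after at most $n-1$ steps, a vector $\mathbf{y}\in S_n$ with $\w(G,\mathbf{y})\ge \w(G,\mathbf{x})$ whose support induces a complete subgraph.

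The points needing care are bookkeeping rather than substance: one checks that $\mathbf{x}(t)$ remains in $S_n$ (immediate) and that $\w(G,\mathbf{x}(t))$ is genuinely affine in $t$ — this is exactly where the non-adjacency of $v_i$ and $v_j$ enters, as the only monomial that would be quadratic in $t$ is the one contributed by the edge $v_iv_j$, which is absent. In the degenerate case where the affine function is constant, either endpoint may be used and the support still strictly shrinks, so the iteration terminates regardless. I do not expect a genuine obstacle in this classical statement; its role is to serve as the template for the multi-graph symmetrization of Section~\ref{sec:sym_mtd}, where the same perturbation must be run simultaneously on several weight functions and the single "affine in $t$" step has to be replaced by a joint argument.
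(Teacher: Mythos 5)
Your argument is correct, and it is essentially the same mechanism the paper uses (the paper only cites Motzkin--Straus for this statement, but its proof of Theorem~\ref{thm:ind} is exactly this perturbation specialized from $d=1$): shift weight along a non-adjacent pair in the support, observe that the absence of the monomial $x_ix_j$ makes $\w(G,\mathbf{x}(t))$ affine in $t$, take the better endpoint, and thereby kill one coordinate of the support without decreasing $\w$. The only organizational difference is that you iterate the local move, whereas the paper phrases it as choosing a dominating vector of minimum support and deriving a contradiction, which is what lets the same argument run simultaneously for several graphs via Lemma~\ref{lem:geometry}.
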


We generalize this result so that it can be applied simultaneously for several graphs.

\begin{theorem}\label{thm:ind}
Let $G$ be a graph on $n$ vertices and let $G_1, G_2, \dots, G_{d}$ be subgraphs of $G$ with the same vertex set. For every ${\mathbf x}\in S_{n}$ there exists a subset $K\subseteq V(G)$ and a vector ${\mathbf y}\in S_n$ with support $K$ such that $\w(G_i,{\mathbf x})\leq \w(G_i,{\mathbf y})$ for every $1 \le i \le d$ and $\alpha(G[K])\leq d$.
\end{theorem}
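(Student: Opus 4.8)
The plan is to run a multi-graph version of the Motzkin--Straus symmetrization, shifting weight not between two non-adjacent vertices (which would control only one of the polynomials) but inside a carefully chosen independent set of $d+1$ vertices. It suffices to establish one \emph{symmetrization step}: if $\mathbf{x}\in S_n$ has support $K$ with $\alpha(G[K])\ge d+1$, then there is $\mathbf{y}\in S_n$ with $\supp(\mathbf{y})\subsetneq K$ and $\w(G_i,\mathbf{x})\le\w(G_i,\mathbf{y})$ for all $1\le i\le d$. Granting this, start from the given $\mathbf{x}$ and apply the step repeatedly; the support strictly shrinks each time, so after finitely many steps we reach a vector $\mathbf{y}$ whose support $K$ satisfies $\alpha(G[K])\le d$, while each $\w(G_i,\cdot)$ has only gone up along the way. (If the initial support already has $\alpha\le d$, take $\mathbf{y}=\mathbf{x}$.)

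To carry out one step, fix an independent set $I\subseteq K$ with $|I|=d+1$ and put $c:=\sum_{j\in I}x_j>0$. I consider only the weight vectors obtained from $\mathbf{x}$ by redistributing the mass $c$ among the vertices of $I$, leaving every other coordinate unchanged; these are parametrized by the $d$-dimensional simplex $\Delta=\{z\in\mathbb{R}^I: z\ge 0,\ \sum_{j\in I}z_j=c\}$, with $\mathbf{x}$ corresponding to an interior point $q\in\mathrm{relint}(\Delta)$. Since $I$ is independent in $G$, hence in every $G_i$, no edge of $G_i$ has both ends in $I$; consequently $\w(G_i,\cdot)$, viewed as a function of $z\in\Delta$, contains no quadratic term and equals an affine function $\ell_i(z)$. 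Let $U:=\{z\in\Delta:\ell_i(z)\ge\ell_i(q)\text{ for all }i\}$, a nonempty ($q\in U$) polytope. If $U$ contains a point $y^{*}$ with $y^{*}_j=0$ for some $j\in I$, then the vector $\mathbf{y}$ agreeing with $\mathbf{x}$ off $I$ and with $y^{*}$ on $I$ has support contained in $K\setminus\{v_j\}\subsetneq K$ and satisfies $\w(G_i,\mathbf{y})=\ell_i(y^{*})\ge\ell_i(q)=\w(G_i,\mathbf{x})$, which is exactly the step.

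The heart of the argument is showing that $U$ must reach $\partial\Delta$. Suppose not, i.e.\ $U\subseteq\mathrm{relint}(\Delta)$. Let $P:=\{z\in\mathrm{aff}(\Delta):\ell_i(z)\ge\ell_i(q),\ i=1,\dots,d\}$, so $U=P\cap\Delta$. The recession cone of $P$ is described inside the $d$-dimensional direction space of $\mathrm{aff}(\Delta)$ by $d$ homogeneous linear inequalities, and any system of at most $m$ homogeneous half-space constraints in $\mathbb{R}^m$ has a nonzero solution (if the normals are linearly dependent the associated equalities have a nonzero solution; if independent, solve $\langle a_i,z\rangle=1$ for all $i$). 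Hence $P$ has a nonzero recession direction $v$. Moving from any $z_0\in U\subseteq\mathrm{relint}(\Delta)$ along $v$, we stay in $P$ for all $t\ge 0$ but, $\Delta$ being bounded, must leave $\Delta$; the point where we first meet $\partial\Delta$ lies in $P\cap\Delta=U$, contradicting $U\subseteq\mathrm{relint}(\Delta)$. Therefore some point (indeed some vertex) of $U$ has a vanishing coordinate, as the step required.

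The step I expect to be the main obstacle is precisely this convex-geometry claim: unlike the classical setting, shifting weight between two non-adjacent vertices no longer works once $d\ge 2$, because the single monotonicity condition splits into $d$ possibly conflicting slope conditions; moving weight within a $(d+1)$-set and using the dimension count above is what resolves the conflict and produces an improving point on the boundary. The remaining ingredients --- that $\w(G_i,\cdot)$ is affine on the face spanned by an independent set, and that the procedure terminates because the support strictly decreases --- are routine.
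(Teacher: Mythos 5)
Your proof is correct and follows essentially the same route as the paper: you exploit that each $\w(G_i,\cdot)$ is affine when mass is redistributed within an independent set of size $d+1$, and your recession-cone claim ($d$ homogeneous half-space constraints in the $d$-dimensional direction space $\{\sum z_j=0\}$ admit a nonzero common solution, proved by the dependent/independent dichotomy) is exactly the paper's Lemma~\ref{lem:geometry}, while your ``first exit point of $\Delta$'' plays the role of the paper's choice of an appropriate $t>0$. The only cosmetic difference is that you iterate the support-reducing step, whereas the paper takes a vector of minimal support and derives a contradiction.
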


To prove Theorem~\ref{thm:ind} we need the following lemma.
\begin{lemma}\label{lem:geometry}
Suppose that ${\mathbf a}_1, \dots, {\mathbf a}_{d}\in {\mathbb{R}}^{d+1}$.
Then there exists a non-zero vector ${\mathbf z}\in {\mathbb{R}}^{d+1}$
  such that ${\mathbf a}_i^T {\mathbf z} \geq 0$ for every $1 \leq i \leq d$ and the sum of the coordinates is $0$, namely $\sum_{1\leq i\leq d+1} z_i=0$.
\end{lemma}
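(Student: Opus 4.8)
The plan is to eliminate the affine constraint $\sum_j z_j=0$ by working inside the hyperplane $H:=\{{\mathbf z}\in{\mathbb R}^{d+1}:\sum_{1\le j\le d+1}z_j=0\}$, which is a $d$-dimensional linear subspace, and then to observe that $d$ homogeneous linear inequalities on a $d$-dimensional space can never have $\{{\mathbf 0}\}$ as their only common solution. Concretely, I would define, for $1\le i\le d$, the linear functional $\phi_i\colon H\to{\mathbb R}$ by $\phi_i({\mathbf z}):={\mathbf a}_i^T{\mathbf z}$, and bundle them into the linear map $\Phi\colon H\to{\mathbb R}^{d}$ given by $\Phi({\mathbf z})=(\phi_1({\mathbf z}),\dots,\phi_d({\mathbf z}))$. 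Since $\dim H=d=\dim{\mathbb R}^{d}$, the map $\Phi$ is injective if and only if it is surjective, and this dichotomy is essentially the whole argument.

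In the surjective case I would pick ${\mathbf z}\in H$ with $\Phi({\mathbf z})=(1,1,\dots,1)$; this ${\mathbf z}$ is nonzero because $\phi_1({\mathbf z})=1$, it lies in $H$ so its coordinates sum to $0$, and ${\mathbf a}_i^T{\mathbf z}=1\ge 0$ for each $i$, as required. In the remaining case $\Phi$ is not injective, so $\ker\Phi=\bigcap_{i=1}^{d}\ker\phi_i$ contains a nonzero vector ${\mathbf z}$; this ${\mathbf z}$ again lies in $H$, hence has coordinate sum $0$, and satisfies ${\mathbf a}_i^T{\mathbf z}=0\ge 0$ for every $i$. Either way ${\mathbf z}$ witnesses the lemma.

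I do not expect a genuine obstacle here; the only thing to be careful about is the dimension count, and in particular that the hypothesis ${\mathbf a}_i\in{\mathbb R}^{d+1}$ is exactly what is needed — it leaves the $\phi_i$ living on a $d$-dimensional space, so there are at most $d$ of them on a space of dimension $d$, and $d$ vectors cannot positively span a $d$-dimensional dual. (With the ${\mathbf a}_i$ only in ${\mathbb R}^{d}$ the functionals would sit on the $(d-1)$-dimensional space $H$, there could then be too many of them, and the common solution cone could degenerate to $\{{\mathbf 0}\}$.) When this lemma is later fed into the proof of Theorem~\ref{thm:ind}, the $d+1$ coordinates will index the vertices of a supposedly oversized independent set and ${\mathbf z}$ will be the direction in which probability mass is transferred, the coordinate-sum-zero condition keeping the point on $S_n$ and the inequalities ${\mathbf a}_i^T{\mathbf z}\ge 0$ ensuring that no $\w(G_i,\cdot)$ decreases.
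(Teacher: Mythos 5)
Your proof is correct and is essentially the same argument as the paper's: the paper encodes the constraint $\sum z_i=0$ by appending the all-ones vector ${\mathbf j}$ to form the square matrix $A=({\mathbf a}_1,\dots,{\mathbf a}_d,{\mathbf j})$ and splits on $\det(A)$, solving $A^T{\mathbf z}=(1,\dots,1,0)^T$ in the nonsingular case and taking a nonzero solution of $A^T{\mathbf z}={\mathbf 0}$ otherwise, which is exactly your surjective/non-injective dichotomy for $\Phi$ restricted to the hyperplane $H$. The two write-ups differ only in packaging (determinant of the augmented matrix versus a linear map on the $d$-dimensional subspace $H$), so there is nothing to fix.
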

\begin{proof}{
Let ${\mathbf j} \in {\mathbb{R}}^{d+1}$ be the all $1$ vector and define the matrix  $A$ as $\{{\mathbf a}_1,\dots,{\mathbf a}_d,{\mathbf j}\}$.
If $\det(A) = 0$, then there are non-trivial solutions of $A^T{\mathbf z}=\mathbf 0$.
If $\det(A) \neq 0$ define ${\mathbf a}:= (1,\dots,1,0)^T \in {\mathbb{R}}^{d+1}$.
There is a unique solution ${\mathbf z}$ of $A^T{\mathbf z}={\mathbf a}$. Clearly, ${\mathbf z} \neq 0$ so we are done.
}\end{proof}

\begin{proof}[\bf Proof of Theorem~\ref{thm:ind}.]{
Let ${\mathbf y} \in S_{n}$ be a vector whose support has minimum size among vectors ${\mathbf y}' \in S_{n}$ satisfying $\w(G_i,{\mathbf x})\leq \w(G_i,{\mathbf y}')$ for every $1 \le i \le d$.
If $\{v_1,v_2,\dots,v_{d+1}\} \subseteq \supp({\mathbf y})$ is an independent set, then for any
\linebreak
${\mathbf z}=(z_1, \dots, z_{d+1},0,0,\dots)^T \in {\mathbb{R}}^n$, $t\in {\mathbb{R}}$, and $1 \le i \le d$ we have
 $\w(G_i,{\mathbf y}+t{\mathbf z})= \w(G_i,{\mathbf y})+t({\mathbf a}_i^T{\mathbf z})$ for some ${\mathbf a}_i \in {\mathbb{R}}^{d+1}$.
Here ${\mathbf a}_i$ depends only on $G_i$ and ${\mathbf y}$, not on ${\mathbf z}$ or $t$.
Apply Lemma~\ref{lem:geometry} to obtain a non-zero vector ${\mathbf z}=(z_1, \dots, z_{d+1},0,0,\dots)^T$ with $\sum_{1\leq i\leq d+1} z_i=0$ and ${\mathbf a}_i^T{\mathbf z}\geq 0$ for $1 \le i \le d$. Choosing an appropriate $t>0$ we have ${\mathbf y}+t{\mathbf z}\in S_n$ and $\supp({\mathbf y}+t{\mathbf z}) \subseteq \supp({\mathbf y})-\{v_j\}$ for some $1 \le j \le d+1$. This is a contradiction, so ${\mathbf y}$ has the desired property.
}\end{proof}


\section{Maximizing the weight of non-triangular edges in a weighted graph}
\label{sec:triangle}


\begin{lemma}\label{lem:triw}
Let $G_1$ be a graph on $n$ vertices $\{v_1,\dots,v_n\}$ and let $G_2$ be a subgraph of $G_1$ whose edges are some of the non triangular edges of $G_1$,  $E(G_2) \neq \emptyset$. For every ${\mathbf x}\in S_{n}$
there exists a subset $K\subseteq V$ and a vector ${\mathbf y}\in S_n$ with support $K$ such that $\w(G_1,{\mathbf x})\leq \w(G_1,{\mathbf y})$ and $\w(G_2,{\mathbf x})\leq \w(G_2,{\mathbf y})$. Furthermore, the graph $H:=G_1[K]$ contains exactly one edge $e$ of $G_2$ and $H \setminus V(e)$ is a complete graph.
%
\end{lemma}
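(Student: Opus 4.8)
The plan is to apply Theorem~\ref{thm:ind} with $d=2$, taking $G=G_1$ and the two subgraphs $G_1$ and $G_2$ themselves. This yields a set $K\subseteq V$ and a vector ${\mathbf y}\in S_n$ with support $K$ such that $\w(G_1,{\mathbf x})\le \w(G_1,{\mathbf y})$, $\w(G_2,{\mathbf x})\le\w(G_2,{\mathbf y})$, and $\alpha(G_1[K])\le 2$. The independence number bound $\alpha(G_1[K])\le 2$ is the key structural output: the complement of $H=G_1[K]$ is triangle-free. I would then argue that we may additionally assume ${\mathbf y}$ has support of minimum size among all vectors achieving the two weight inequalities — indeed the proof of Theorem~\ref{thm:ind} already starts from such a minimal vector, so $K$ is minimal in this sense.

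The next step is to pin down $E(G_2)\cap E(H)$. First, I would show it is nonempty: if $H$ contained no edge of $G_2$, then $\w(G_2,{\mathbf y})=0$; but I must ensure $\w(G_2,{\mathbf x})>0$ can be assumed — if $\w(G_2,{\mathbf x})=0$ to begin with, one can perturb ${\mathbf x}$ slightly to put positive mass on both endpoints of some edge of $G_2$ (using $E(G_2)\ne\emptyset$), strictly increasing $\w(G_2,\cdot)$ while we only need $\le$ for $G_1$; alternatively, handle this degenerate case separately at the end. So assume $\w(G_2,{\mathbf y})>0$, hence $H$ contains at least one edge of $G_2$. Now suppose $H$ contains two distinct edges $e,e'$ of $G_2$. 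Since edges of $G_2$ are non-triangular in $G_1$, the endpoints of $e$ have no common neighbor in $H$, and likewise for $e'$; in particular $e$ and $e'$ cannot share a vertex that would create a triangle, and more importantly the four (or three) endpoints involved form, together with the non-edges forced by "non-triangular," a configuration I can exploit. The cleanest route: among all the edges of $G_2$ inside $H$, by a further minimality/exchange argument on ${\mathbf y}$ (shifting mass between the two "sides" of a non-triangular edge, which does not decrease $\w(G_1,\cdot)$ because a non-triangular edge's endpoints have disjoint neighborhoods in $H$), I can drive the support down until exactly one edge of $G_2$ remains, contradicting minimality if two were present. This is essentially a localized Zykov symmetrization across a single non-triangular edge.

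Finally, with exactly one edge $e=uv$ of $G_2$ inside $H$, I need $H\setminus\{u,v\}$ to be complete. Every vertex of $K':=K\setminus\{u,v\}$ together with the non-adjacency structure: since $\alpha(G_1[K])\le 2$, any independent set in $H$ has size at most $2$; the edge $uv$ is the unique $G_2$-edge, but there could a priori be other non-edges of $H$ inside $K'$ or between $K'$ and $\{u,v\}$. I would rule out non-edges inside $K'$ as follows: if $w,w'\in K'$ are non-adjacent in $H$, then $\{w,w'\}$ is an independent set; combined with $uv\in E(H)$ and $\alpha\le 2$, each of $w,w'$ is adjacent to at least one of $u,v$. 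If, say, $w$ is non-adjacent to $u$ then $\{w,u\}$ is independent, so by $\alpha\le 2$ again... the bookkeeping here forces a contradiction with the uniqueness of $e$ unless $w,w'$ are both adjacent to both $u$ and $v$, and then another application of symmetrization on the non-edge $ww'$ (which is now non-triangular relative to the relevant structure, or can be made to decrease the support) contradicts the minimality of $K$. The honest statement is that $H\setminus V(e)$ being complete should drop out of combining $\alpha(G_1[K])\le 2$ with the minimality of the support once we know $e$ is the only $G_2$-edge; I would carry out this elimination of stray non-edges carefully.

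The main obstacle I anticipate is the second step — forcing $|E(G_2)\cap E(H)|=1$ rather than just $\ge 1$. Theorem~\ref{thm:ind} alone gives $\alpha\le 2$, which permits many non-edges, so one genuinely needs an additional exchange/symmetrization argument tailored to non-triangular edges (exploiting that the two endpoints of a non-triangular edge have disjoint neighborhoods in $G_1[K]$, so mass can be freely redistributed between them without decreasing $\w(G_1,\cdot)$). Getting that redistribution to strictly reduce the support — and to respect the $G_2$-weight constraint simultaneously — is the delicate point, and is presumably where the specific "non-triangular" hypothesis does its real work.
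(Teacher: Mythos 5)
Your setup coincides with the paper's: apply Theorem~\ref{thm:ind} with $d=2$, take $\mathbf y$ of minimum support, and kill a nonadjacent pair $v_k,v_h$ whenever both partial derivatives at $v_k$ dominate those at $v_h$ (condition~(\ref{eq3}) in the paper); that same merge also gives the last step, since two vertices of $K\setminus V(e)$ both have zero $G_2$-partials, so your case bookkeeping there is unnecessary. The genuine gap is exactly the step you defer: forcing $|E(G_2)\cap E(H)|=1$. The merge is not enough, and your proposed mechanism is incorrect as stated: shifting mass $t$ from $u$ to $v$ across a non-triangular edge $uv$ changes $\w(G_1,\cdot)$ by $t\,(y_u-y_v+S_v-S_u)-t^2$, where $S_u,S_v$ are the weights of the private neighborhoods, and this can strictly decrease for every $t\neq 0$; disjointness of $N(u)$ and $N(v)$ does not give free redistribution. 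Moreover, in the hard configurations (two adjacent $G_2$-edges $v_1v_2,v_1v_3$ together with a $G_2$-edge $v_3v_4$ and $v_2v_4\in E(H)$, the paper's Case~1c; or two parallel $G_2$-edges $v_1v_2,v_3v_4$ with $v_1v_3,v_2v_4\in E(H)$, Case~2b) the double-domination needed for condition~(\ref{eq3}) need not hold for any nonadjacent pair, so no merge is available and your "drive the support down" argument has nothing to run on.

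What is actually needed, and what the paper supplies, is twofold. First, $\alpha(H)\le 2$ plus non-triangularity pins down the local structure exactly (statement~(\ref{eq3.2}); $B_1=B_3$, $B_2=B_4$ in the parallel case), which settles the easy configurations by a merge. Second, in Cases~1c and~2b one uses a different move: $\mathbf y'(t)=\mathbf y+t(\mathbf e_1+\mathbf e_2-\mathbf e_3-\mathbf e_4)$, i.e.\ mass flows from one $G_2$-edge to the other. The structure forces $\w(G_1,\mathbf y'(t))=\w(G_1,\mathbf y)$ for all feasible $t$, while $\w(G_2,\cdot)$ changes by a convex quadratic ($t^2+t(y_2-y_4)$, resp.\ $2t^2+t(y_1+y_2-y_3-y_4)$), whose maximum over the feasible interval is attained at an endpoint, where it is strictly positive and a coordinate of $\mathbf y'$ vanishes --- contradicting minimality of the support. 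This convexity/endpoint device is the heart of the lemma and is absent from your sketch. One more caveat: the degenerate case $\w(G_2,\mathbf x)=0$ cannot be repaired by perturbing $\mathbf x$, because the conclusion can genuinely fail there (take $G_1=K_3\cup K_2$, $G_2$ the $K_2$, $\mathbf x$ uniform on the triangle: any $\mathbf y$ with $\w(G_1,\mathbf y)\ge 1/3$ is supported in the triangle, so $H$ has no $G_2$-edge); the lemma should simply be read with $\w(G_2,\mathbf x)>0$, which holds in its only application $\mathbf x=(1/n)\mathbf j$, and then $H$ automatically contains a $G_2$-edge.
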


\begin{proof}{
By Theorem~\ref{thm:ind}, we know that there is a ${\mathbf y}\in S_{n}$ such that $\w(G_1,{\mathbf x})\leq \w(G_1,{\mathbf y})$, $\w(G_2,{\mathbf x})\leq \w(G_2,{\mathbf y})$ and $\alpha(H)\leq 2$. Let ${\mathbf y}=(y_1,\dots,y_n)$ be such a vector whose support has minimal size.
We claim that $K:= \supp({\mathbf y})$ satisfies the required properties.
First we show that the structure of $G_2[K]$ is rather simple, then we show that by finding an appropriate ${\mathbf y}'$
 one can further reduce $K$ if $G_2[K]$ has two or more edges.

Recall that $\frac{\partial}{\partial z_k}f$ stands for the partial derivative of the function $f(z_1, z_2, ...,z_n)$ with respect to the variable $z_k$.
Suppose that $v_k$ and $v_h\in K$ are nonadjacent vertices such that
\begin{equation}\label{eq3}
\frac{\partial}{\partial y_k}\w(G_1, {\mathbf y}) \geq \frac{\partial}{\partial y_h}\w(G_1, {\mathbf y})
  {\rm  \, \, \, \, and \, \,\, \, }   \frac{\partial}{\partial y_k}\w(G_2, {\mathbf y}) \geq \frac{\partial}{\partial y_h}\w(G_2, {\mathbf y}).
  \end{equation}
In other words, $\sum\{ y_\ell: v_kv_\ell\in E(G_i[K])\}\geq \sum\{ y_\ell: v_hv_\ell\in E(G_i[K])\}$ for $i=1,2$.
Define the vector ${\mathbf y}' \in S_n$ by 
\[
y'_{\ell} =
\left\{
\begin{array}{ll}
y_k+y_h & {\ell}=k \\
0 & {\ell}=h \\
y_{\ell} & \mbox{otherwise}.
\end{array}
\right.
\]
We have  $\w(G_i,{\mathbf y})\leq \w(G_i,{\mathbf y}')$ for $i\in \{ 1,2\}$ and $\supp({\mathbf y}')=K\setminus \{ v_h\}$, a contradiction.
We conclude that condition (\ref{eq3}) does not hold.

Without loss of generality, we may suppose that $v_1v_2$ is a $G_2$-edge  of $H$. From now on, in this section if we talk about 'edges', 'degrees' etc., then we always mean $H$-edges, degree in $H$, etc., except if it is otherwise stated.

If $\w(G_1,{\mathbf y})\leq  1/4$ then define   ${\mathbf y}'=(1/2, 1/2, 0,\dots, 0)$.
We obtain $\w(G_2,{\mathbf y})\leq \w(G_1,{\mathbf y})\leq 1/4 =\w(G_1,{\mathbf y}')=\w(G_2,{\mathbf y}')$. This implies $K=\{ 1,2 \}$ and we are done.
So from now on, we suppose that $\w(G_1,{\mathbf y}) >  1/4$.
Then the Motzkin-Straus theorem implies that the graph $H$ is not triangle-free.

\begin{claim}\label{cl:1}
There are no two adjacent edges of $G_2[K]$.
\end{claim}

\begin{proof}[Proof of Claim~\ref{cl:1}.]
%
Assume, to the contrary, that $v_1v_2$ and $v_1v_3 \in E(H)$ are $G_2$ edges.
We claim that
\begin{multline}
${}$\quad\quad \mbox{$v_2$ and $v_3$ are non-adjacent, $\deg(v_1)=2$, and}\\
 \mbox{$H\setminus \{ v_1,v_2,v_3\}$ is a complete graph.}\quad\quad${}$\label{eq3.2}\end{multline}
Indeed, $v_2$ and $v_3$ are non-adjacent, otherwise the triangle $v_1v_2v_3$ contains $G_2$ edges.
Suppose, to the contrary, that $|N(v_1)| > 2$, i.e., there exists a vertex $v_4 \neq v_2, v_3$, such that $v_1v_4\in E(H)$. Since $\alpha(H) \le 2$ and $v_2v_3 \notin E(H)$, without loss of generality, $v_3v_4 \in E(H)$.
Then the triangle $v_1v_3v_4$ contains a $G_2$ edge (namely $v_1v_3$), a contradiction, so we must have $N(v_1)=\{v_2,v_3\}$.
Finally, the condition $\alpha(H)\leq 2$ implies that $K\setminus (N(v_1) \cup \{ v_1\})$ induces a complete graph (cf., Figure~\ref{fig:adj1}).

The statement~(\ref{eq3.2}) already implies that the structure of $G_2$ edges is rather simple in $H$.
Using Condition~(\ref{eq3}) and other techniques, we reach a contradiction, considering three possible cases.

\paragraph{\bf Case~$1a$.}\enskip
Assume that there is no $G_2$ edge connecting $\{ v_1,v_2,v_3\}$ to
  $K\setminus \{ v_1,v_2,v_3\}$.

Then $ \frac{\partial}{\partial y_2}\w(G_2, {\mathbf y}) = \frac{\partial}{\partial y_3}\w(G_2, {\mathbf y})$ (namely, both are $y_1$).
Since $v_2$ and $v_3$ are non-adjacent the conditions of (\ref{eq3}) hold,  a contradiction.

\begin{figure}[ht]
    \centering
    {\includegraphics[width=0.32\textwidth]{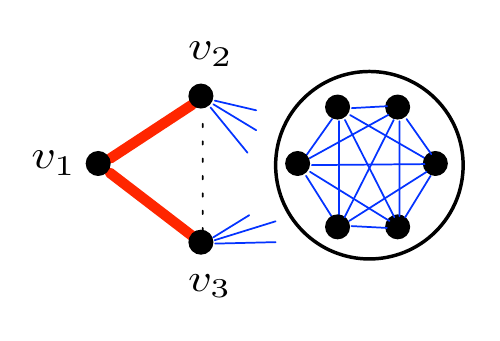}}
    \hspace{16mm}
    {\includegraphics[width=0.32\textwidth]{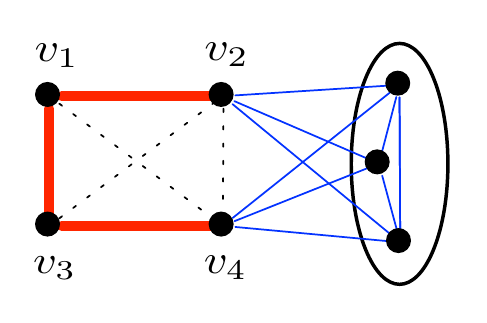}}
    \caption{\quad The structure of $H$ in Cases~$1a$ and~$1b$. The $G_2$ edges are bold.}
    \label{fig:adj1}
\end{figure}


\noindent
{\bf Case~$1b$.}\enskip
Assume that there is a $G_2$ edge, say $v_3v_4$, connecting $\{ v_1,v_2,v_3\}$ to $K\setminus \{ v_1,v_2,v_3\}$ such that  $v_2v_4\notin E(H)$.

According to~(\ref{eq3.2}) the set $A:=\{ v_1, ..., v_4\}$ spans only these three $G_2$ edges, $v_1$ and $v_3$ are degree 2 vertices, and
$(K\setminus A)\cup \{ v_i\}$ are complete graphs for $i\in \{ 2,4\} $.
Since $H$ must contain triangles we have $|K\setminus A|\geq 2$ and $H$ does not contain further $G_2$ edge (see Figure~\ref{fig:adj1}).
Suppose that $y_1\geq y_3$.
We obtain that
 $$
   \frac{\partial}{\partial y_2}\w(G_2, {\mathbf y})=y_1 \geq \frac{\partial}{\partial y_4}\w(G_2, {\mathbf y})=y_3,
  $$
and
$$
\frac{\partial}{\partial y_2}\w(G_1, {\mathbf y})= y_1 +\sum_{\ell> 4}  y_\ell\geq \frac{\partial}{\partial y_4}\w(G_1, {\mathbf y})=
   y_3+\sum_{\ell> 4} y_\ell.
$$
Since $v_2$ and $v_4$ are non-adjacent, this contradicts Condition (\ref{eq3}).

\paragraph{\bf Case~$1c$.}\enskip
Assume that there is a $G_2$ edge, say $v_3v_4$, connecting $\{ v_1,v_2,v_3\}$ to $K\setminus \{ v_1,v_2,v_3\}$ such that  $v_2v_4\in E(H)$.

According to~(\ref{eq3.2}) the set $A:=\{ v_1, ..., v_4\}$ spans only these four edges, $v_1$ and $v_3$ have degree 2, and
$K\setminus \{ v_1, v_3\}$ is a complete graph of size at least $3$ (see Figure~\ref{fig:adj2}).
$H$ does not contain other $G_2$ edges. We have
$$
 \w(G_1,{\mathbf y})= (y_1+y_4)(y_2+y_3) + (y_2+y_4)\left(\sum_{\ell> 4} y_\ell\right) + \underset{i>j>4}{\sum\sum}\,\, y_iy_j
  $$
and
$$
 \w(G_2,{\mathbf y})= y_1y_2+ y_1y_3 + y_3y_4.
  $$
Substitute ${\mathbf y}':={\mathbf y}'(t)= {\mathbf y} + t({\mathbf e}_1+{\mathbf e}_2-{\mathbf e}_3-{\mathbf e}_4)$ into the above equations (Figure~\ref{fig:adj2}).
Note that ${\mathbf y}'\in S_n$ if $t\in I:= [\max\{-y_1, -y_2\}, \min\{ y_3, y_4\}]$.
We get  $\w(G_1, {\mathbf y}')=\w(G_1,{\mathbf y})$ and
$$
 \w(G_2, {\mathbf y}')-\w(G_2,{\mathbf y})= t^2+t(y_2-y_4).
  $$
The right hand side is a convex polynomial of $t$ and it takes its maximum on $I$ in one of the endpoints.
Taking this optimal $t$ we obtain that
$ \max _{t\in I} \w(G_2, {\mathbf y}') >\w(G_2,{\mathbf y})$ and $|\supp({\mathbf y}')|< |\supp({\mathbf y})|$, a contradiction.
This completes the proof of Claim~\ref{cl:1} that $H$ has no adjacent $G_2$ edges.
\end{proof}
\begin{figure}[t]
    \centering
    {\includegraphics[width=0.32\textwidth]{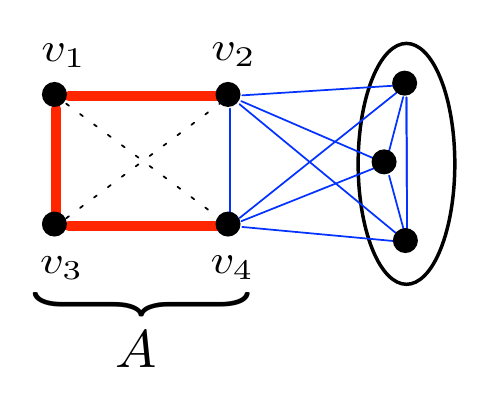}}
    \hspace{16mm}
    {\includegraphics[width=0.32\textwidth]{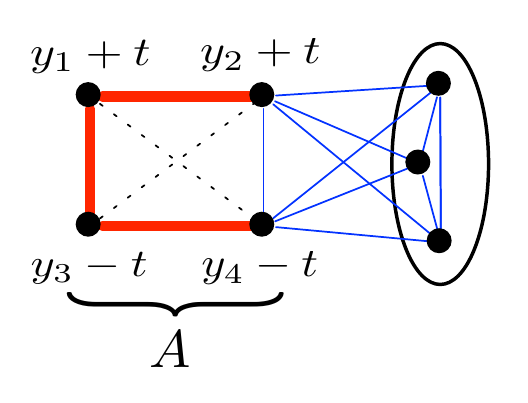}}
    \caption{\quad The structure of $H$ in Case~$1c$, \hspace{10mm} and  the change of the weights.}
    \label{fig:adj2}
\end{figure}

\begin{claim}\label{cl:2}
There are no two parallel edges of $G_2[K]$.
\end{claim}

\begin{proof}[Proof of Claim~\ref{cl:2}.]
According to Claim~\ref{cl:1}, $G_2[K]$ is a matching, $\{v_1v_2$, $ v_3v_4, \dots$, $v_{2k-1}v_{2k}\}$. We will show $k=1$.
Assume, to the contrary, that $v_1v_2$ and $v_3v_4$ are two disjoint $G_2$ edges of $H$.

Define $A:=\{ v_1, \dots, v_4 \}$. Since $v_1v_2$ and $v_3v_4$ are two non-triangular edges, the set $A$ can contain at most two more edges of $H$, and those should be disjoint to each other.
So without losing generality, we may assume that $v_1v_4$ and $v_2v_3 \not\in E(G_1)$, (cf., Figure~\ref{fig:case2}).

Let $B_i:=\{ v\in K\setminus A: vv_i\in E(H)\}$ for $1\leq i\leq 4$.
We claim that $B_1=B_3$. Indeed, if $v_5\in B_1$ then $v_2v_5\not\in E(G_1)$, otherwise $\{ v_1,v_2,v_5\}$ forms a triangle. Then $v_5v_3\in E(H)$ otherwise $\{ v_2, v_3,v_5\}$ forms an independent set. Hence $v_5 \in B_3$, implying $B_1 \subseteq B_3$. By symmetry $B_3 \subseteq B_1$, we obtain $B_1=B_3$ and similarly $B_2=B_4$.

Since $v_1v_2$ is a $G_2$ edge we have $B_1\cap B_2=\emptyset$ (actually, $\{A, B_1, B_2\}$ is a partition of $K$). We distinguish two cases.

\begin{figure}[ht]
    \centering
     {\includegraphics[width=0.38\textwidth]{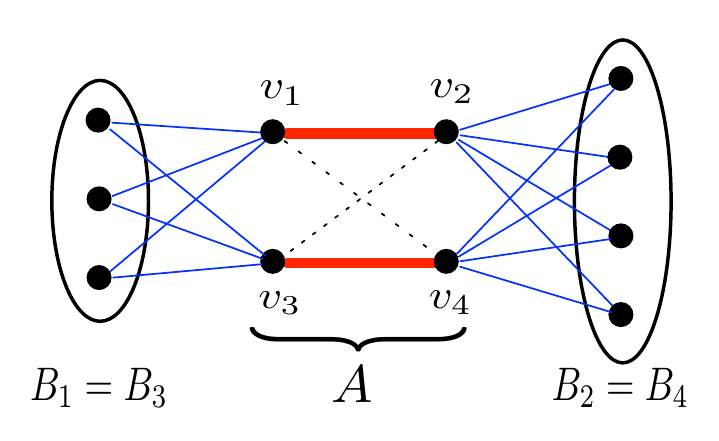}}
    \hspace{10mm}
    {\includegraphics[width=0.38\textwidth]{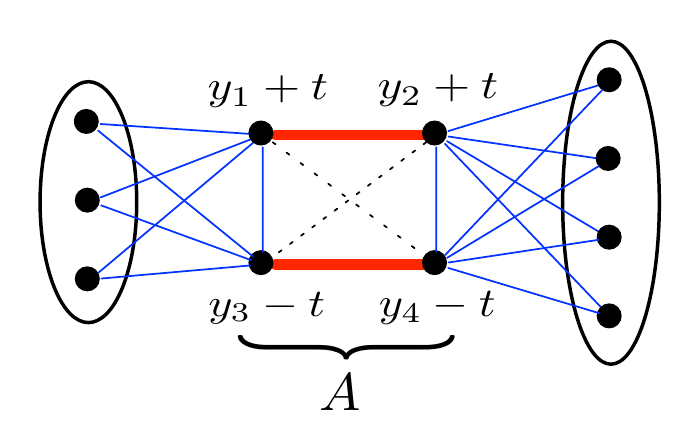}}
    \caption{\quad The structure of $H$ in Case~$2$, \hspace{8mm} and  the change of the weights in Case $2b$.\label{fig:case2}}
\end{figure}

\paragraph{\bf Case~$2a$.}\enskip
Assume first that $v_1v_3 \notin E(H)$.

Suppose that $y_2\geq y_4$.
Since no $G_2$-edge joins $A$ to $K\setminus A$
  and $B_1=B_3$ we obtain that
 $$
   \frac{\partial}{\partial y_1}\w(G_2, {\mathbf y})=y_2 \geq \frac{\partial}{\partial y_3}\w(G_2, {\mathbf y})=y_4,
  $$
and
$$
\frac{\partial}{\partial y_1}\w(G_1, {\mathbf y})= y_2 +\sum_{y_\ell\in B_1}  y_\ell\geq \frac{\partial}{\partial y_3}\w(G_1, {\mathbf y})=
   y_4+\sum_{y_\ell\in B_1} y_\ell.
$$
Since $v_1$ and $v_3$ are non-adjacent, this contradicts (\ref{eq3}).

So we may assume that $A$ contains the edge $v_1v_3$.
By symmetry, we may assume that $A$ contains the edge $v_2v_4$, too.

\paragraph{\bf Case~$2b$.}\enskip
Finally, $A$ contains the edges $v_1v_3$ and $v_2v_4$ (see Figure~\ref{fig:case2}).

We have

$$
\begin{array}{lll}
 \w(G_1,{\mathbf y})&=& (y_1+y_4)(y_2+y_3) \\[2mm]
& & + (y_1+y_3)\left(\sum_{y_\ell\in B_1} y_\ell\right) + (y_2+y_4)\left(\sum_{y_\ell\in B_2} y_\ell\right) +
 \underset{v_i,v_j\notin A,\enskip v_iv_j \in E(H)}{\sum\sum}\,\, y_iy_j,
 \end{array}
  $$
and
$$
 \w(G_2,{\mathbf y})= y_1y_2 + y_3y_4+\dots+y_{2k-1}y_{2k}.
  $$
Substitute ${\mathbf y}':={\mathbf y}'(t)= {\mathbf y} + t({\mathbf e}_1+{\mathbf e}_2-{\mathbf e}_3-{\mathbf e}_4)$ into the above equations.
Note that ${\mathbf y}'\in S_n$ if $t\in I:= [\max\{-y_1, -y_2\}, \min\{ y_3, y_4\}]$.
We get  $\w(G_1, {\mathbf y}')=\w(G_1,{\mathbf y})$ and
$$
 \w(G_2, {\mathbf y}')-\w(G_2,{\mathbf y})= 2t^2+t(y_1+y_2-y_3-y_4).
  $$
The right hand side is convex, it takes its maximum on $I$ in one of the endpoints.
Taking this optimal $t$ we obtain that
$ \max _{t\in I} \w(G_2, {\mathbf y}') >\w(G_2,{\mathbf y})$ and $|\supp({\mathbf y}')|< |\supp({\mathbf y})|$, a contradiction. This completes the proof of Claim~\ref{cl:2}.
\end{proof}

\medskip
\noindent
{\it The end of the proof of Lemma~\ref{lem:triw}.}\quad
Claims~\ref{cl:1} and~\ref{cl:2} imply that $H$ has a unique $G_2$ edge.  
We claim that the vertices in $H$ which are not adjacent to any $G_2$ edge of $H$ induce a clique.
To see this, consider  two such vertices $v_i$ and $v_j$.
We have $ \frac{\partial}{\partial y_i}\w(G_2, {\mathbf y})=0= \frac{\partial}{\partial y_j}\w(G_2, {\mathbf y})$ so the inequalities of (\ref{eq3}) hold. Therefore $v_i$ and $v_j$ must be adjacent to avoid a contradiction.
}\end{proof}


\section{A continuous lower bound for the number of triangular edges}
\label{sec:lb}

In this section, by using Lemma~\ref{lem:triw}, we will prove the 
  main result of this paper, i.e., Theorem~\ref{thm:th2}.
Recall that  
                             $ g(n,e): =\min\{e-bc : G \in {\mathcal G}(a,b,c)$ with $e(G)\geq e, \, a+b+c=n\}$
  (see~\eqref{eq2}).
We define $t(n,e)$ to be a real valued version of $g(n,e)$ as follows,
\begin{equation}\label{eq41}
t(n,e):=\min\{e-bc: a+b+c=n,\, a,b,c \in {\mathbb{R}}_+,\,\, \frac{1}{2}a^2+ab+bc \ge e\}.
   \end{equation}
Obviously, $t(n,e)\leq g(n,e)$ for $n^2/4\leq e \leq {n \choose 2}$.
Furthermore,
\begin{equation}\label{lem8}
g(n, e)- \coef1 n \leq t(n,e).
\end{equation}
Indeed, suppose that $(a,b,c)\in {\mathbb{R}}_+^3$ yields the optimal value, $t(n,e)=e-bc$.
It is a straightforward calculation to show that the choice of
$(a',b',c'):=(\lceil a +1\rceil, \lceil b \rceil, n-a'-b')$ satisfies (\ref{eq2})
and the difference between $(e-b'c')$ and $(e-bc)$ is at most $\coef1 n$.

We cannot prove Conjecture~\ref{conj1} that $g(n,e)\leq\Tr(n,e)$ (i.e., that they are equal),
 but as an application of Lemma~\ref{lem:triw} we will show that $t(n,e)$ is a lower bound for $\Tr(n,e)$.
\begin{theorem}\label{th7}
For $e>n^2/4$ we have
$t(n, e) \leq \Tr(n,e)$.
\end{theorem}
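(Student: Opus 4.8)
The plan is to bound $\Tr(n,e)$ from below by using Lemma~\ref{lem:triw} to replace an arbitrary extremal graph with a weighted graph of very restricted structure, and then to evaluate the resulting optimization problem by comparing it against the definition of $t(n,e)$. First I would take a graph $G$ on $n$ vertices with $e$ edges achieving $\Tr(G)=\Tr(n,e)$. Let $G_1=G$ and let $G_2$ be the subgraph consisting of all non-triangular edges of $G$; if $G_2$ has no edges then every edge is triangular and $\Tr(n,e)=e$, while $t(n,e)\le e$ trivially, so we may assume $E(G_2)\ne\emptyset$. Applying Lemma~\ref{lem:triw} to the uniform vector ${\mathbf x}=(1/n,\dots,1/n)\in S_n$ produces a vector ${\mathbf y}\in S_n$ with support $K$ such that $\w(G_1,{\mathbf y})\ge \w(G_1,{\mathbf x})=e/n^2$ and $\w(G_2,{\mathbf y})\ge \w(G_2,{\mathbf x})=(e-\Tr(n,e))/n^2$, and moreover $H:=G_1[K]$ has exactly one $G_2$-edge and $H$ minus the two endpoints of that edge is a complete graph.

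Next I would parametrize the optimal ${\mathbf y}$ according to this structure. Write the unique $G_2$-edge as $v_1v_2$, and let $C$ (size $c'$) be the set of vertices of $K$ adjacent in $H$ to exactly one of $v_1,v_2$ — these contribute non-triangular weight — and let $A$ (size $a'$) be the rest of $K$, which induces a clique together with appropriate edges to $v_1,v_2$. The weight $\w(G_1,{\mathbf y})$ is then a sum over the clique structure, $\w(G_2,{\mathbf y})$ is the weight of the star-like configuration at $v_1,v_2$ plus edges into $C$, and one can bound these by grouping the $y$-masses into three totals $\alpha=\sum_{A} y_i$, $\beta=y_1+y_2$ (or a similar two-part split), $\gamma=\sum_C y_i$, which are non-negative reals summing to $1$. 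The inequality $\w(G_1,{\mathbf y})\ge e/n^2$ then becomes, after clearing $n^2$, an inequality of the shape $\tfrac12 a^2+ab+bc\ge e$ with $(a,b,c):=(n\alpha,n\beta,n\gamma)\in{\mathbb R}_+^3$, $a+b+c=n$; and the non-triangular weight translates to $e-\Tr(n,e)\le n^2\w(G_2,{\mathbf y})\le bc$ (up to the quadratic self-terms within $\beta$, which are absorbed since $\tfrac12\beta^2\le\tfrac12 a^2$-type slack or handled by a more careful three-way split). Hence $e-\Tr(n,e)\le bc$ with $(a,b,c)$ feasible for~\eqref{eq41}, giving $\Tr(n,e)\ge e-bc\ge t(n,e)$.

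The main obstacle I expect is the bookkeeping in the middle step: extracting precisely the constraint $\tfrac12 a^2+ab+bc\ge e$ from the clique-plus-pendant structure of $H$, and simultaneously showing $n^2\w(G_2,{\mathbf y})\le bc$. The clique on $A\cup\{v_1,v_2\}$ contributes a weight close to $\tfrac12(\alpha+\beta)^2$ rather than $\tfrac12\alpha^2+\alpha\beta$, so one must be careful about which vertices go into which of the three aggregated groups and about the at most three extra $G_2$-edges at the configuration described in Lemma~\ref{lem:triw} (the vertices not adjacent to the $G_2$-edge forming a clique, versus the two endpoints $v_1,v_2$ and their one-sided neighbors). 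The cleanest route is probably: put $A:=K\setminus\bigl(N_H(v_1)\cup N_H(v_2)\cup\{v_1\}\bigr)$ together with $v_2$ into the "clique" group, take the neighbors of $v_1$ in the other group, verify that every edge of $H$ is accounted for, and check the two inequalities directly; any lower-order discrepancy (of size $O(n)$) is harmless because Theorem~\ref{th7} already compares against $t(n,e)$ and the looser bound $g(n,e)-\coef1 n$ is handled separately via~\eqref{lem8}. If a clean three-group split fails to give exactly the form in~\eqref{eq41}, the fallback is to observe that $\w(G_1,{\mathbf y})\le \tfrac12(\alpha+\beta)^2+(\alpha+\beta)\gamma$ does not immediately work, so one should instead set up the aggregation so that the pendant mass $\beta$ sits in the "$b$" slot, forcing $\tfrac12 a^2+ab+bc\ge n^2\w(G_1,{\mathbf y})\ge e$, which is exactly~\eqref{eq41}.
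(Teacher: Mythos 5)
Your overall strategy is the paper's: take an extremal $G_1$, let $G_2$ be its non-triangular edges, apply Lemma~\ref{lem:triw} to the uniform vector, and convert the two weight inequalities into a feasible triple for~\eqref{eq41}. But the step you yourself flag as the obstacle is exactly where the proposal breaks down, and none of the aggregations you float actually closes it. First, you misstate what Lemma~\ref{lem:triw} gives: since $G_2[K]$ consists of a \emph{single} edge $v_1v_2$ and ${\mathbf y}$ is supported on $K$, the non-triangular weight is exactly $\w(G_2,{\mathbf y})=y_1y_2$ --- there is no ``star-like configuration plus edges into $C$.'' Second, putting $\beta=y_1+y_2$ into the ``$b$'' slot and the one-sided neighbors' mass $\gamma$ into the ``$c$'' slot does not give the needed inequality $e-\Tr(n,e)\le bc$: you would need $y_1y_2\le(y_1+y_2)\gamma$, which fails when the one-sided neighborhood carries little mass, and your fallback remarks (``absorbed,'' ``handled by a more careful three-way split'') never specify a split for which both $e-\Tr(n,e)\le bc$ and $\tfrac12a^2+ab+bc\ge e$ hold. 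Third, the claim that an $O(n)$ discrepancy is harmless is not available here: Theorem~\ref{th7} asserts the exact inequality $t(n,e)\le\Tr(n,e)$ (and it is precisely this exact bound that, combined with $g(n,e)-(3/2)n\le t(n,e)$, yields Theorem~\ref{thm:th2}); introducing extra additive slack proves a different statement.

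The correct choice is the simplest one: with $y_1\ge y_2$, set $b:=ny_1$, $c:=ny_2$, $a:=n\sum_{k\ne1,2}y_k$, so that $a+b+c=n$ and $e-\Tr(n,e)\le n^2\w(G_2,{\mathbf y})=n^2y_1y_2=bc$ holds with no bookkeeping at all. Feasibility for~\eqref{eq41} then uses only that $v_1v_2$ is non-triangular in $G_1$, hence $N(v_1)\cap N(v_2)=\emptyset$ inside $K$: writing $\w(G_1,{\mathbf y})=y_1y_2+y_1\sigma_1+y_2\sigma_2+\sum_{i<j,\;i,j\ne1,2}y_iy_j$ with $\sigma_1,\sigma_2$ the masses of the two disjoint neighborhoods, one gets $y_1\sigma_1+y_2\sigma_2\le y_1(\sigma_1+\sigma_2)\le (b/n)(a/n)$ and the last sum is at most $\tfrac12(a/n)^2$, so $e\le n^2\w(G_1,{\mathbf y})\le bc+ab+\tfrac12a^2$. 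Note that the clique structure of $H\setminus\{v_1,v_2\}$, around which your grouping revolves, is not even needed for this estimate; what carries the argument is the exactness of $\w(G_2,{\mathbf y})=y_1y_2$ and the disjointness of the two neighborhoods. As written, your proof does not reach the stated inequality.
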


\noindent{\bf Proof}. 
Suppose that $G_1$ is a graph with $n$ vertices, $e$ edges and minimum number of edges in triangles, i.e., $G_1$ has $\Tr(n,e)$ triangle edges.
Let $G_2$ be the subgraph of $G_1$ consisting of the edges not in any triangle of $G_1$.
Consider the vector $(1/n){\mathbf j}=(1/n, 1/n, \dots, 1/n) \in  {\mathbb{R}}^n$.
By Lemma~\ref{lem:triw} there exists a ${\mathbf y}=(y_1,\dots,y_n)\in S_n$ with support $K$ such that
$G_2[K]$ consists of a single edge, say $v_1v_2$. Moreover
\begin{equation}\label{eq43}
    \frac{e}{n^2} = \w(G_1,(1/n){\mathbf j})\leq \w(G_1,{\mathbf y})
   \end{equation}
 and
 \begin{equation}\label{eq44}
    \frac{e-\Tr(n,e)}{n^2} = \w(G_2,(1/n){\mathbf j})\leq \w(G_2,{\mathbf y})=y_1y_2.
   \end{equation}
Assume that $y_1 \geq y_2$ and define $a:= \left(\sum_{k\neq 1,2} y_k\right)n$, $b:=y_1n$, $c:=y_2n$.
Then \eqref{eq44} yields that  $\Tr(n,e)\geq e - bc$.
We claim that the reals $a,b$, and $c$ satisfy the constraints in~\eqref{eq41}, hence
 $e-bc  \geq t(n,e)$, completing the proof.

Indeed, since $v_1v_2$ is not in any triangle, $N(v_1)\cap N(v_2) = \emptyset$, we get from~\eqref{eq43} that
\begin{eqnarray*} 
  \frac{e}{n^2} & \leq & \w(G_1,{\mathbf y})\\[2mm]
  {} & = & y_1y_2 + y_1(\sum_{y_k\in N(v_1), k\neq 2}y_k) + y_2(\sum_{y_k\in N(v_2), k\neq 1}y_k) +
     \underset{i<j, \, \, i,j\neq 1,2}{\sum\sum}\,\, y_iy_j \\[2mm]
  {} &\leq & \frac{bc}{n^2}+ \frac{b}{n}\times \frac{a}{n} + \frac{1}{2}(\frac{a}{n})^2.    {} \hspace{66mm} \qed
\end{eqnarray*}%

\section{Further problems, minimizing $C_{2k+1}$ edges}
\label{sec:future}


In addition to the question of minimizing the number of triangular edges, Erd\H{o}s, Faudree and Rousseau~\cite{EFR92} also considered a conjecture of Erd\H{o}s~\cite{Erdos97} regarding pentagonal edges asserting that a graph on $n$ vertices and at least  $\lfloor n^2/4\rfloor+1$  edges has at most  $n^2/36+O(n)$ non-pentagonal edges.
This value can be obtained by considering a graph having two components, a complete graph on $[2n/3]+1$ vertices and a complete bipartite
graph on the rest.
This conjecture was mentioned in the papers of Erd\H{o}s~\cite{Erdos97} and also in the problem book of Fan Chung and Graham~\cite{CG97}.

Erd\H{o}s, Faudree, and Rousseau~\cite{EFR92} proved that if $G$ is a graph with $n$ vertices and at least $\lfloor n^2/4\rfloor +1 $ edges then for any fixed $k\geq 2$
 at least $\frac{11}{144}n^2-O(n)$ edges of $G$ are in cycles of length $2k+1$.
So there is  a jump of $\Omega(n^2)$ in the number of $C_5$-edges, while the construction
  of ${\mathcal G}(n,e)$ shows that for $K_3$-edges the change is smoother, $\Tr (n, n^2/4+ x)= O(n\sqrt{x})$.

In a forthcoming paper~\cite{FM14+} we give an example of graphs with $\lfloor n^2/4\rfloor +1$ edges and
 $n^2/8(2+\sqrt{2}) + O(n) = n^2/27.31...$ non-pentagonal edges, disproving Erd\H{o}s' conjecture.
Using the weighted symmetrization method  we show that this coefficient is asymptotically the best possible for
$e> (n^2/4)+o(n^2)$.
On the other hand, we asymptotically establish the conjecture of Erd\H{o}s that for every $k\geq 3$,
 the maximum number of non-$C_{2k+1}$ edges in a graph of size exceeding $(n^2/4)+o(n^2)$ is at most $n^2/36 +o(n^2)$,
 as in the graph of two-components described above.

More generally, given a graph $F$, one can define $h(n,e,F)$ as the minimum number of $F$-edges
   among all graphs  of  $n$  vertices and  $e$  edges.
In a forthcoming paper~\cite{FM14+} we asymptotically determine  $h(n,\lambda n^2, F)$  for any
 fixed  $\lambda$,  when $1/4 < \lambda < 1/2$ and $F$ is 3-chromatic.
Many problems, e.g., an $F$ with a higher chromatic number, or natural generalizations for hypergraphs remain open.


\medskip
\noindent
{\it  A remark on very dense graphs.} \quad
One can verify Conjecture~\ref{conj1} for $n\leq 8$ and in general for $e\geq {n \choose 2}-(3n-13)$.
This and (\ref{eq1}) yield the exact value of $\Tr(n,e)$ for all pairs with $n\leq 10$ except  $\Tr(10,27)$.
More details can be found in the {\tt arXiv} version~\cite{FM15}.

\medskip
\noindent
{\it A remark on  keeping equalities.} \quad
Taking ${\mathbf a}:= {\mathbf e}_\ell^T \in {\mathbb{R}}^{d+1}$ for some $1\leq \ell \leq d$
instead of ${\mathbf a}:= (1,\dots,1,0)^T \in {\mathbb{R}}^{d+1}$ in the proof of
Lemma~\ref{lem:geometry} one can obtain a sharper version of it.
Namely,  there exists a non-zero vector ${\mathbf z}\in {\mathbb{R}}^{d+1}$ such that  $\sum_{1\leq i\leq d+1} z_i=0$
 and ${\mathbf a}_\ell^T {\mathbf z} \geq 0$, but  ${\mathbf a}_i^T {\mathbf z} = 0$ for every $1 \leq i \leq d$, $i\neq \ell$.

This sharper version of Lemma~\ref{lem:geometry} yields a sharper version of  Theorem~\ref{thm:ind}.
Namely, there exists an appropriate vector ${\mathbf y}\in S_n$ such that
   $\w(G_\ell,{\mathbf x})\leq \w(G_\ell,{\mathbf y})$ and $
   \w(G_i,{\mathbf x})= \w(G_i,{\mathbf y})$ for every $1 \le i \le d$, $i\neq \ell$.

Then the proof of  Lemma~\ref{lem:triw} can be adjusted so that given $\ell\in \{ 1,2\}$ one can find
 an appropriate  vector ${\mathbf y}\in S_n$
such that $\w(G_\ell,{\mathbf x})= \w(G_\ell,{\mathbf y})$ and $\w(G_{3-\ell},{\mathbf x})\leq \w(G_{3-\ell},{\mathbf y})$.

\medskip
\noindent
{\it Acknowledgment.} \quad
The authors are very thankful for the referee for helpful comments.

\bigskip
\noindent
{\bf New developments} (as of May 2016). \quad
Since the first public presentations of our results (e.g., in the Combinatorics seminar of  the Department Mathematics and Computer Science at Emory University,
December 6, 2013, in the Oberwolfach Combinatorics Workshop, Jan 5--11, 2014) and
posting the present manuscript on {\tt arXiv}~\cite{FM15} on November 4, 2014, there were (at least) two remarkable achievements.

Gruslys and Letzter~\cite{GL16+} using a refined version of the symmetrization method proved that
 there exists an $n_0$ such that $\Tr(n,e)=g(n,e)$ for all $n> n_0$.
The second part of our Conjecture~\ref{conj1}, namely that the extremal graph should be from a ${\mathcal G}(a,b,c)$, is still open.

Grzesik, P.~Hu, and Volec~\cite{GHV16+} using Razborov's flag algebra method
 showed that every $n$-vertex graph with 
  $ \lfloor n^2/4\rfloor +1$ edges has at least $(n^2/4) -  n^2/8(2+\sqrt{2}) -\varepsilon n^2$
  pentagonal edges for $n> n_0(\varepsilon)$ for every $\varepsilon > 0$.
They also proved that those graphs have at most  
 $n^2/36 +\varepsilon n^2$ $C_{2k+1}$-edges 
  for $n> n_k(\varepsilon)$ for every $\varepsilon > 0$ and $k\geq 3$.
In~\cite{FM14+} we were able to prove the same results only for graphs with
$ \lfloor n^2/4\rfloor+ \varepsilon n^2$ edges (for $n> n_0(k, \varepsilon)$, $k\geq 2$).
Let's close with a slightly corrected version of Erd\H os conjecture.

\begin{conj}\label{conj2}
Suppose that $G$ is an $n$-vertex graph with $e$ edges, such that $e> n^2/4$ and it has the minimum number of  $C_{2k+1}$-edges, $k\geq 3$, $n> n_k$.
Then $G$ is connected and has two blocks, one of them is a complete bipartite graph
  and the other one is almost complete.
\end{conj}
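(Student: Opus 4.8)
The plan is to mimic the strategy behind Theorem~\ref{th7} and Conjecture~\ref{conj1}, replacing the pair (graph, non-triangular edges) by the pair $G_1:=G$ and $G_2:=$ the spanning subgraph whose edges are the non-$C_{2k+1}$-edges of $G$, and then pushing the weighted symmetrization method far enough to recover the full block structure rather than merely a lower bound on the count. First I would fix the weight vector $(1/n){\mathbf j}$ and apply a refinement of Theorem~\ref{thm:ind} (the equality-preserving version of the concluding remark) to $G_1,G_2$, obtaining a vector ${\mathbf y}\in S_n$ supported on a small set $K$ with $\w(G_1,{\mathbf y})\ge e/n^2$ and $\w(G_2,{\mathbf y})\ge (e-h)/n^2$, where $h$ denotes the number of $C_{2k+1}$-edges of $G$. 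As in the proof of Lemma~\ref{lem:triw} I would take ${\mathbf y}$ of minimum support, so that the no-improving-merge condition~\eqref{eq3} holds for every pair of nonadjacent vertices of $H:=G_1[K]$.

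The combinatorial heart is a structural lemma for $H$ analogous to Lemma~\ref{lem:triw}, but now asserting that $H$ splits into a \emph{clique part} and a \emph{complete bipartite part} sharing at most one vertex, with $E(G_2[K])$ equal precisely to the edges of the bipartite part. I would prove this in two movements. (i) Show that the vertices incident to $G_2$-edges organize into the two sides of a complete bipartite graph on which $\w$ concentrates; here the endpoint-merging inequalities of~\eqref{eq3} play the role they did in Claims~\ref{cl:1}~and~\ref{cl:2}. (ii) Show that the vertices \emph{not} incident to any $G_2$-edge form a clique, by the derivative argument used at the end of Lemma~\ref{lem:triw}: two such vertices have $G_2$-derivative $0$, so~\eqref{eq3} forces adjacency. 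The convexity-in-$t$ computations that eliminated adjacent and parallel $G_2$-edges in Cases~$1c$ and~$2b$ would be replaced by analogous single-variable perturbations ${\mathbf y}'={\mathbf y}+t\,({\mathbf e}_i+{\mathbf e}_j-{\mathbf e}_k-{\mathbf e}_\ell)$ holding $\w(G_1,\cdot)$ fixed while strictly increasing $\w(G_2,\cdot)$, contradicting minimality of the support.

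Once the weighted template is pinned to the clique-plus-complete-bipartite form, I would translate back to the discrete extremal $G$. Reading $\w(G_1,{\mathbf y})\ge e/n^2$ and $\w(G_2,{\mathbf y})\ge (e-h)/n^2$ against the template optimum yields, exactly as in~\eqref{eq41}--\eqref{eq43}, a real program whose optimizer puts a clique on $\approx 2n/3$ vertices and a balanced complete bipartite part on the rest, so that $e-h\le n^2/36+o(n^2)$, matching the two-block construction. To upgrade this weighted/asymptotic statement to the exact structural claim of Conjecture~\ref{conj2} for $n>n_k$, I would run a stability argument: a graph within $o(n^2)$ of the extremal count must be $o(n^2)$-close in edit distance to the extremal graph, after which a finite sequence of local edge moves (none of which can decrease the $C_{2k+1}$-edge count if $G$ is extremal) forces $G$ to be connected with exactly two blocks, one complete bipartite and the other almost complete.

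The step I expect to be the genuine obstacle is movement~(i). For triangles, ``non-triangular'' means ``the two endpoints have no common neighbor,'' a strictly $2$-local condition, which is why freezing $G_2$ in Lemma~\ref{lem:triw} is legitimate and the case analysis terminates. Being a non-$C_{2k+1}$-edge is instead a \emph{global} condition about the existence of a path of length exactly $2k-1$ between the endpoints, and the vertex merges underlying the symmetrization can create or destroy such paths; one cannot simply freeze $E(G_2)$ and read the bipartite structure off local adjacencies. Making~(i) work will require a separate combinatorial lemma showing that an edge set avoiding $C_{2k+1}$ inside an almost-symmetrized (clique-like) host must live in a bipartite block genuinely separated from the odd-cycle-rich part — this separation is precisely what produces the cut vertex and the two-block conclusion, and it is where the hypothesis $k\ge 3$ (as opposed to the anomalous pentagon case $k=2$) must enter.
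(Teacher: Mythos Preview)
The statement you are attempting to prove is Conjecture~\ref{conj2}, which the paper explicitly leaves open; there is no proof in the paper to compare against. It is presented in the closing ``New developments'' paragraph as a corrected form of Erd\H{o}s' conjecture, and neither the authors nor the cited follow-up works of Gruslys--Letzter or Grzesik--Hu--Volec claim it. So any comparison with ``the paper's own proof'' is vacuous.

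As for the proposal itself, you have honestly diagnosed its own fatal gap in your final paragraph. Movement~(i) is not merely ``the genuine obstacle'' --- it is the entire content of the conjecture. The symmetrization machinery of Theorem~\ref{thm:ind} and Lemma~\ref{lem:triw} works because $G_2$ is \emph{fixed} throughout the weight-shifting process: the set of non-triangular edges of $G_1$ is determined once and for all, and one optimizes $\w(G_2,\cdot)$ as a quadratic form. You propose to do the same with $G_2$ equal to the non-$C_{2k+1}$-edges, but then the local merges and the perturbations ${\mathbf y}'={\mathbf y}+t({\mathbf e}_i+{\mathbf e}_j-{\mathbf e}_k-{\mathbf e}_\ell)$ no longer interact with the \emph{actual} non-$C_{2k+1}$ structure of the evolving graph --- they only tell you about the frozen quadratic form, which has lost its meaning. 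Your ``separate combinatorial lemma'' asserting that a $C_{2k+1}$-free edge set inside an almost-symmetrized host must live in a bipartite block is not a technical step to be filled in; it is a restatement of what needs to be proved. Moreover, even the asymptotic count you aim for in your third paragraph ($e-h\le n^2/36+o(n^2)$) is exactly what \cite{FM14+} and \cite{GHV16+} establish by substantial further work, and the exact structural upgrade via ``stability plus local moves'' is unproved in the literature. In short: the plan is a reasonable outline of why one might believe the conjecture, but it is not a proof, and the paper does not claim one.
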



\end{document}